\documentclass[12pt]{article}

\usepackage{amsfonts}
\usepackage{amsmath}
\usepackage{amssymb}
\usepackage{color}
\usepackage{amsthm}
\usepackage{hyperref}
\usepackage{makecell}
\usepackage{graphicx}
\usepackage{multirow}

\theoremstyle{plain}
\newtheorem{theorem}{Theorem}

\newtheorem{proposition}[theorem]{Proposition}

\theoremstyle{definition}

\theoremstyle{remark}
\newtheorem{remark}[theorem]{Remark}

\title{Geometric approximation for the number of returns of a transient Markov chain to its origin} 

\author{Fraser Daly\footnote{Department of Actuarial Mathematics and Statistics, and the Maxwell Institute for Mathematical Sciences, Heriot--Watt University, Edinburgh EH14 4AS, UK. E-mail: F.Daly@hw.ac.uk}\, and Seva Shneer\footnote{Department of Actuarial Mathematics and Statistics, and the Maxwell Institute for Mathematical Sciences, Heriot--Watt University, Edinburgh EH14 4AS, UK. E-mail: V.Shneer@hw.ac.uk}} 

\date{\today}

\begin{document}

\maketitle

\noindent{\bf Abstract} 
Given a discrete-time, transient Markov chain, we establish explicit total variation error bounds in the approximation of the number of visits to its starting state within the first $n$ time steps by a geometric distribution. Our error bounds are expressed in terms of the heat kernel of the Markov chain. As applications, we consider Galton--Watson processes with a geometric offspring distribution, and random walks on the one- and three-dimensional integer lattices, regular trees, supercritical percolation clusters, and groups with polynomial growth. 

\vspace{12pt}

\noindent{\bf Key words and phrases: }Markov chain; return time; Galton--Watson process; random walk; Stein's method; heat kernel

\vspace{12pt}

\noindent{\bf MSC 2020 subject classification: }60F05 (Primary); 60G42; 60G50; 62E17 (Secondary)

\section{Introduction and main result}

Let $\{X_i:i=0,1,\ldots\}$ be a discrete-time Markov chain with $X_0=x^\star$, a transient state. In this note we give an explicit error bound in the geometric approximation of the number of returns to the state $x^\star$ within the first $n$ time steps. That is, we bound the total variation distance between the random variable $N_n$ defined by
\begin{equation}\label{eq:Ndef}
N_n=|\{1\leq i\leq n: X_i=x^\star\}|
\end{equation}
and a geometric distribution with a suitable parameter. The total variation distance is defined by
\[
d_\text{TV}(N,G)=\sup_{A\subseteq\mathbb{Z}^+}|\mathbb{P}(N\in A)-\mathbb{P}(G\in A)|\,,
\]
for random variables $N$ and $G$ taking values in $\mathbb{Z}^+=\{0,1,\ldots\}$. We say that $G$ has a geometric distribution with parameter $p\in(0,1]$, written $G\sim\text{Geom}(p)$, if $\mathbb{P}(G=j)=p(1-p)^{j}$ for $j\in\mathbb{Z}^+$. By transience of $x^\star$ we have that $\lim_{n\to\infty}\mathbb{P}(N_n=0)>0$, and this will be the parameter we choose for our approximating geometric distribution. We emphasise that in many applications our results give bounds valid for a fixed $n$, not only rates of convergence in total variation distance. 

In this we are motivated by the work of Pek\"oz and R\"ollin \cite{pr11b}, who give error bounds in Wasserstein distance for exponential approximation of the number of returns of a Markov chain to its origin. Their proofs make use of Stein's method, which we also employ. These techniques have also been applied to a variety of other approximation problems related to Markov chains, including Poisson \cite[Section 8.5]{bhj92} and compound Poisson \cite{e99} approximations for the number of visits to rare sets, and geometric \cite{p96} and compound geometric \cite{d10} approximations for hitting times.

We illustrate our main result with applications to Galton--Watson processes with a geometric offspring distribution, simple random walks on the one- and three-dimensional integer lattices, and random walks on regular trees, supercritical percolation clusters and groups with polynomial growth. These are discussed in Sections \ref{sec:GW}--\ref{sec:GG}, before which we establish our main result, a geometric approximation bound in terms of the heat kernel of the underlying Markov chain.
\begin{theorem}\label{thm:main}
Let $\{X_i:i=0,1,\ldots\}$ be a Markov chain with transient initial state $X_0=x^\star$, and $N_n$ be as defined in \eqref{eq:Ndef}.  
Let $q=\lim_{n\to\infty}\mathbb{P}(N_n=0)>0$ and $H\sim\text{Geom}(q)$. Then
\[
d_\text{TV}(N_n,H)\leq q\left\lceil\frac{1-q}{q}\right\rceil\sum_{i=n+1}^\infty\mathbb{P}(X_i=x^\star)\,,
\]
where $\lceil\cdot\rceil$ is the ceiling function.
\end{theorem}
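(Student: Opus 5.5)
The natural comparison object is $N_\infty=\lim_{n\to\infty}N_n$, the total number of returns to $x^\star$. By the strong Markov property, each return is followed by a further return with probability $1-q$, independently of the past, so $N_\infty\sim\text{Geom}(q)$ exactly. A direct coupling then gives $d_\text{TV}(N_n,H)\le\mathbb{P}(N_\infty\neq N_n)\le\sum_{i>n}\mathbb{P}(X_i=x^\star)$. The theorem asks for the sharper factor $q\lceil(1-q)/q\rceil\in[1-q,1)$, and I would obtain it with Stein's method for the geometric distribution, following the approach of Pek\"oz and R\"ollin.

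\textbf{Step 1: Stein equation.} For $A\subseteq\mathbb{Z}^+$, let $g=g_A$ solve
\[
(1-q)g(j+1)-g(j)=\mathbb{1}(j\in A)-\mathbb{P}(H\in A),\qquad j\ge0,
\]
with $g(0)=0$, and with $g$ bounded on $\mathbb{Z}^+$. Evaluating at $j=N_n$ and taking expectations gives $\mathbb{P}(N_n\in A)-\mathbb{P}(H\in A)=\mathbb{E}[(1-q)g(N_n+1)-g(N_n)]$. Since $N_\infty\sim\text{Geom}(q)$, the same expression vanishes with $N_\infty$ in place of $N_n$. Therefore
\[
\mathbb{P}(N_n\in A)-\mathbb{P}(H\in A)=\mathbb{E}\bigl[(1-q)\{g(N_n+1)-g(N_\infty+1)\}-\{g(N_n)-g(N_\infty)\}\bigr].
\]

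\textbf{Step 2: reduce to an expected difference.} On the natural coupling, $N_\infty-N_n$ counts the returns after time $n$. Writing each increment $g(N_\infty)-g(N_n)$ as a telescoping sum of first differences $\Delta g(k)=g(k+1)-g(k)$ over $k=N_n,\ldots,N_\infty-1$ bounds the right-hand side by a Stein factor times $\mathbb{E}(N_\infty-N_n)=\sum_{i>n}\mathbb{P}(X_i=x^\star)$. Rather than bounding the two brackets separately, I would combine them: rewrite $(1-q)g(N+1)-g(N)$ at $N=N_n$ and at $N=N_\infty$ via the Stein equation, so that only one weighted difference of $g$ along the path from $N_n$ to $N_\infty$ survives. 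Alternatively, condition on the first return time after $n$ and use the strong Markov property, under which $N_\infty-N_n$ is $0$ or $1+\text{Geom}(q)$, independent of $N_n$. This would make the remainder an explicit average of $\mathbb{1}(N_n+k\in A)-\mathbb{P}(H\in A)$ against geometric weights.

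\textbf{Step 3: the Stein factor (main obstacle).} The key bound to establish is
\[
\sup_A\sup_j|g_A(j)|\le\left\lceil\frac{1-q}{q}\right\rceil
\]
(or the analogous bound on $\Delta g_A$), so that the factor $q\lceil(1-q)/q\rceil$ emerges after the multiplication by $q$ from Step 2. I would solve the recursion explicitly:
\[
g_A(j)=-\sum_{k\ge0}(1-q)^k\bigl[\mathbb{1}(j+k\in A)-\mathbb{P}(H\in A)\bigr].
\]
Writing $g_A(j)$ in terms of $\mathbb{P}(H\in A-j\mid\cdot)$ then turns the bound into optimising over sets $A$. The worst case should be a set of consecutive integers, and its length being an integer is what produces the ceiling. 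This optimisation is the step I expect to be delicate; I would first test the extremal sets $A=\{j,\ldots,j+m-1\}$ and then argue monotonically that no other $A$ does worse.
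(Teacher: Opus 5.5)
\textbf{There is a genuine gap: Step 3 is false, though the fix is already in your first paragraph.}

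\textbf{Step 3 fails.} From your own formula, $g_A(j)=q^{-1}\bigl[\mathbb{P}(H\in A)-\mathbb{P}(H+j\in A)\bigr]$. Hence $\sup_{A,j}|g_A(j)|=q^{-1}\sup_j d_\text{TV}(H,H+j)=q^{-1}$. But $\lceil(1-q)/q\rceil=\lceil 1/q\rceil-1<1/q$. Concretely, for $q=1/2$, $A=\{0,1\}$ and $j=2$ you get $g_A(2)=3/2>1=\lceil(1-q)/q\rceil$.

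The correct uniform bound is on first differences: $|g_A(j+1)-g_A(j)|\le1$, attained at $A=\{j\}$. With that bound, the telescoping in Step 2 only gives $(2-q)\sum_{i>n}\mathbb{P}(X_i=x^\star)$. Rewriting both brackets via the Stein equation is circular: it just returns $\mathbb{P}(N_n\in A)-\mathbb{P}(N_\infty\in A)$. The ``multiplication by $q$'' you rely on is never actually produced.

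Also, $N_\infty-N_n$ is not independent of $N_n$. Whether a return occurs after time $n$ depends on $X_n$, which is correlated with $N_n$; only the excess after the first such return is an independent $\text{Geom}(q)$. So the ceiling cannot be obtained by optimising $g_A$ over intervals.

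\textbf{The missing idea.} In your first paragraph you bounded $\mathbb{P}(N_\infty\neq N_n)$ by $\mathbb{E}[N_\infty-N_n]$, and that step loses exactly the factor $q$. Let $\sigma=\inf\{i>n:X_i=x^\star\}$. Then $\{N_\infty\neq N_n\}=\{\sigma<\infty\}$. By the strong Markov property at $\sigma$, on this event $N_\infty-N_n=1+H'$ with $H'\sim\text{Geom}(q)$ independent of $\mathcal{F}_\sigma$. Hence
\[
\sum_{i=n+1}^\infty\mathbb{P}(X_i=x^\star)=\mathbb{E}[N_\infty-N_n]=\mathbb{P}(\sigma<\infty)\Bigl(1+\frac{1-q}{q}\Bigr)=\frac{\mathbb{P}(\sigma<\infty)}{q}\,,
\]
and the coupling inequality gives $d_\text{TV}(N_n,H)\le\mathbb{P}(\sigma<\infty)=q\sum_{i>n}\mathbb{P}(X_i=x^\star)$.

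Since $\lceil(1-q)/q\rceil\ge1$ for $q<1$, and both sides vanish when $q=1$, this proves the theorem. It does so without the ceiling factor, so the theorem's constant is weaker than, not sharper than, what direct coupling gives. Your ``alternatively'' route, carried out correctly, yields $\mathbb{P}(N_n\in A)-\mathbb{P}(H\in A)=\mathbb{E}[I(\sigma<\infty)\{g_A(N_n+1)-g_A(N_n)\}]$ and the same bound, but the Stein step is then superfluous.

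\textbf{Comparison with the paper.} The paper takes a different route:
\begin{itemize}
\item It approximates $N_n$ by $\text{Geom}(p_n)$, with $p_n=\mathbb{P}(N_n=0)$, using Pek\"oz's coupling of $N_n+1$ with $N_n\mid N_n>0$ and Stein factor $1$.
\item It then passes to $\text{Geom}(q)$ by the triangle inequality, using the Adell--Jodr\'a bound $d_\text{TV}(\text{Geom}(p_n),\text{Geom}(q))\le\lceil(1-q)/q\rceil(p_n-q)$.
\item A renewal argument gives $p_n-q\le q\sum_{i>n}\mathbb{P}(X_i=x^\star)$.
\end{itemize}
The ceiling is an artefact of that geometric-to-geometric comparison.
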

\begin{proof}
We begin by following the approach of Pek\"oz \cite{p96}, employing Stein's method for geometric approximation by coupling the random variables $N_n+1$ and $N_n|N_n>0$. In doing so we will make use of
\[
T=\inf\{i\geq1:X_i=x^\star\}\,,
\]
the first return time to $x^\star$, which we note may be infinite. 
Since the event $\{N_n>0\}$ is equal to the event $\{T\leq n\}$, we proceed by sampling $T^\star$ with the distribution of $T|T\leq n$, and then $\{X^\star_i:i\geq1\}$ with the distribution of $\{X_i:i\geq1\}$ conditional on the event that $X_{T^\star}=x^\star$. Letting $\eta_i=I(X_i^\star=x^\star)$, where $I(\cdot)$ is an indicator function, and using the regenerative property of the Markov chain, we may then construct the required random variables by writing 
\begin{equation}\label{eq:coupling}
N_n+1\stackrel{d}{=}\sum_{i=T^\star}^{n+T^\star}\eta_i \qquad\text{and}\qquad N_n|N_n>0\stackrel{d}{=}\sum_{i=1}^n\eta_i\,,
\end{equation}
where `$\stackrel{d}{=}$' denotes equality in distribution. With $\eta_i=0$ for $i=1,\ldots,T^\star-1$ by construction, the difference between the random variables in \eqref{eq:coupling} is $\sum_{i=n+1}^{n+T^\star}\eta_i$, which is almost surely non-negative. 

We use this coupling to establish a bound on $d_\text{TV}(N_n,G)$, where $G\sim\text{Geom}(p)$ and $p=\mathbb{P}(N_n=0)$. To that end we follow \cite{p96} and for a given $A\subseteq\mathbb{Z}^+$ let $f_A:\mathbb{Z}^+\to\mathbb{R}$ be such that $f_A(0)=0$ and
\begin{equation}\label{eq:SteinEq}
(1-p)f_A(j+1)-f_A(j)=I(j\in A)-\mathbb{P}(G\in A)\,,
\end{equation}
for $j\geq0$. Then 
\begin{equation}\label{eq:SteinFactor}
\sup_{j\in\mathbb{Z}^+}|f_A(j+1)-f_A(j)|\leq1
\end{equation}
for any $A$ (see Lemma 2.5 of \cite{prr13}), and using \eqref{eq:coupling}--\eqref{eq:SteinFactor} we may write
\begin{align*}
|\mathbb{P}(N_n\in A)-\mathbb{P}(G\in A)|&=(1-p)|\mathbb{E}[f_A(N_n+1)]-\mathbb{E}[f_A(N_n)|N_n>0]|\leq(1-p)\mathbb{E}\sum_{i=n+1}^{n+T^\star}\eta_i\\
&=(1-p)\left(\mathbb{E}[N_n+1]-\mathbb{E}[N_n|N_n>0]\right)
=1-p(1+\mathbb{E}N_n)\,,
\end{align*}
giving $d_\text{TV}(N_n,G)\leq1-p(1+\mathbb{E}N_n)$. To complete the proof we combine this with the triangle inequality:
\[
d_\text{TV}(N_n,H)\leq d_\text{TV}(N_n,G)+d_\text{TV}(G,H)\,.
\]
Letting $r(n)=p(n)-q$, by Proposition 2.5 of \cite{aj06} we have $d_\text{TV}(G,H)\leq\left\lceil\frac{1-q}{q}\right\rceil r(n)$, since $p(n)$ is decreasing in $n$ and so $r(n)\geq0$. Then
\begin{align*}
d_\text{TV}(N_n,H)&\leq1-(q+r(n))(1+\mathbb{E}N_n)+\left\lceil\frac{1-q}{q}\right\rceil r(n)
\leq1-q(1+\mathbb{E}N_n)+\left(\left\lceil\frac{1-q}{q}\right\rceil-1\right)r(n)\\
&=q\sum_{i=n+1}^\infty\mathbb{P}(X_i=x^\star)+\left(\left\lceil\frac{1-q}{q}\right\rceil-1\right)r(n)\,,
\end{align*}
noting for the final equality that the number of returns to $x^\star$ over an infinite time horizon has a geometric distribution with parameter $q$ and mean $\frac{1-q}{q}=\sum_{i=1}^\infty\mathbb{P}(X_i=x^\star)$, and writing
\[
\mathbb{E}N_n=\sum_{i=1}^n\mathbb{P}(X_i=x^\star)=\frac{1-q}{q}-\sum_{i=n+1}^\infty\mathbb{P}(X_i=x^\star)\,.
\]

Letting $a_i=\mathbb{P}(X_i=x^\star)$, it remains only to show that $r(n)\leq q\sum_{i=n+1}^\infty a_i$. To that end, we use a standard renewal-theoretic argument to write $\mathbb{E}N_n=\mathbb{P}(T\leq n)+\sum_{j=1}^{n-1}\mathbb{P}(T=j)\mathbb{E}N_{n-j}$.
Combining this with the expression $\mathbb{E}N_n=\sum_{i=1}^n a_i$, we have that
\[
\sum_{i=1}^na_i=1-\mathbb{P}(T>n)+\sum_{i=1}^{n-1}a_i\mathbb{P}(T\leq n-i)\,,
\]
from which it follows that $\mathbb{P}(T>n)=1-a_n-\sum_{i=1}^{n-1}a_i\mathbb{P}(T>n-i)$,
or, equivalently,
\[
p(n)=1-a_n-\sum_{i=1}^{n-1}a_ip(n-i)=1-\sum_{i=1}^na_ip(n-i)\,,
\]
defining $p(0)=1$. Since $p(n)\geq q$ for each $n\geq0$, we have
\[
r(n)=1-q-\sum_{i=1}^na_ip(n-i)\leq1-q-q\sum_{i=1}^na_i=1-q\left(1+\sum_{i=1}^\infty a_i\right)+q\sum_{i=n+1}^\infty a_i\,.
\]
Finally, recalling that $q(1+\sum_{i=1}^na_i)=1$, we obtain $r(n)\leq q\sum_{i=n+1}^\infty a_i$, completing the proof. 
\end{proof}

\begin{remark}
For comparison with Theorem \ref{thm:main}, we note that we easily obtain an exact expression for the Wasserstein distance $d_\text{W}(N_n,H)$ between $N_n$ and $H$ by observing that $N_n$ is stochastically smaller than $H$ and so $d_\text{W}(N_n,H)=\mathbb{E}[H-N_n]=\sum_{i=n+1}^\infty\mathbb{P}(X_i=x^\star)$.

In the continuous-time setting, we may similarly evaluate the Wasserstein distance between time spent at the origin and an exponential random variable. Let $\{Y_t:t\geq0\}$ be a continuous-time Markov chain with transient initial state $Y_0=y^\star$ and transition rate matrix with $(i,j)$th entry $\lambda_{i,j}$ for $i\not=j$. After arrival in a state $i$, the process spends an amount of time in that state which is exponentially distributed with rate $\lambda_i=\sum_{j\not=i}\lambda_{i,j}$. Let $T_n$ denote the total amount of time spent in state $y^\star$ up to the time immediately before the ($n+1$)th transition. Letting $\{X_i:i=0,1,\ldots\}$ be the embedded discrete-time Markov chain such that $X_0=y^\star$ and $X_i$ is equal to the state of $\{Y_t:t\geq0\}$ immediately following its $i$th transition, we may write $T_n=\sum_{i=1}^{N_n+1}\tau_i$, where the $\tau_i$ are IID and exponentially distributed with rate $\lambda_{y^\star}$, and $N_n$ is as in \eqref{eq:Ndef}. 
Let $E$ be an exponential random variable with rate $q\lambda_{y^\star}$ for $q$ as in Theorem \ref{thm:main} for the embedded discrete-time chain. Noting that $E$ is equal in distribution to $\sum_{i=1}^{H+1}\tau_i$, where $H\sim\text{Geom}(q)$, and that $N_n$ is stochastically smaller than $H$, we have that $T_n$ is stochastically smaller than $E$ and so
$d_\text{W}(T_n,E)=\mathbb{E}\left[E-T_n\right]=\lambda_{y^\star}^{-1}\sum_{i=n+1}^\infty\mathbb{P}(X_i=y^\star)$. Establishing related bounds in other metrics in this setting is left as a question for future research.
\hfill\openbox
\end{remark}

\section{Galton--Watson processes with geometric offspring} \label{sec:GW}

We now consider applications of Theorem \ref{thm:main} to the generation sizes of a Galton--Watson process with geometric offspring distribution. That is, beginning with a single individual in generation 0, each individual alive at time $i$ independently produces a geometrically distributed number of offspring; the total offspring produced by individuals in the $i$th generation constitutes the $(i+1)$th generation. We give explicit geometric approximation results for the number of the first $n$ generations to contain exactly one individual, as defined by \eqref{eq:Ndef} with $x^\star=1$, beginning with the critical case (where the offspring distribution has unit mean), then discussing the supercritical case (in which this mean is larger than one). Stein's method has also been used by Pek\"oz, R\"ollin and Ross \cite{prr13} to give geometric approximation bounds for the generation size of a critical Galton--Watson process conditioned on non-extinction; see also \cite{cjp24,pr11b,pr11a} for related exponential approximation results. 

\subsection{The critical case}

Let $X_i$ be the size of the $i$th generation of a critical Galton--Watson process whose offspring distribution is geometric with parameter $1/2$, started at generation 0 with a single ancestor. Our geometric approximation result in this case builds upon recent work of \cite{wz25} who showed in their Corollary 3 that $N_n$ converges in distribution to a geometric random variable with parameter $6/\pi^2$ as $n\to\infty$, so that, in the notation of Theorem \ref{thm:main}, $q=6/\pi^2$ here and $\left\lceil\frac{1-q}{q}\right\rceil=1$. 
It is well known that 
\[
\mathbb{E}[s^{X_i}]=1-\frac{1-s}{1+i(1-s)}
\]
for $i\geq1$ and $s\in[0,1]$; see Section I.4 of \cite{an72}. It follows that $\mathbb{P}(X_i=1)=\frac{1}{(i+1)^2}$. Noting that
\[
\sum_{i=n+1}^\infty\mathbb{P}(X_i=1)=\sum_{i=n+2}^\infty\frac{1}{i^2}\leq\int_{n+1}^\infty\frac{\text{d}x}{x^2}=\frac{1}{(n+1)}\,,
\]
Theorem \ref{thm:main} then immediately gives the following proposition.
\begin{proposition}\label{cor:GWcritical}
Let $\{X_i:i=0,1,\ldots\}$ be a critical Galton--Watson process with geometric offspring distribution and $X_0=1$. Let $H\sim\text{Geom}(6/\pi^2)$. Then
\[
d_\text{TV}(N_n,H)\leq\frac{6}{\pi^2(n+1)}\,.
\]
\end{proposition}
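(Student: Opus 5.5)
The plan is to apply Theorem \ref{thm:main} directly with $x^\star=1$. Three inputs are needed: transience of the state $1$, the value $q=6/\pi^2$, and the tail sum bound computed just before the statement.

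\emph{Transience.} The state $1$ is transient for this chain, since $0$ is absorbing and reachable from $1$ with probability $1/2$ in one step. I would then identify $q$ directly rather than only quoting \cite{wz25}. As noted in the proof of Theorem \ref{thm:main}, the total number of returns over an infinite horizon is geometric with parameter $q$ and mean
\[
\frac{1-q}{q}=\sum_{i=1}^\infty\mathbb{P}(X_i=1).
\]

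\emph{Computing $q$.} From the generating function $\mathbb{E}[s^{X_i}]=1-\frac{1-s}{1+i(1-s)}$, differentiating at $s=0$ gives $\mathbb{P}(X_i=1)=(i+1)^{-2}$. This is a quick check: the derivative of $-(1-s)/(1+i(1-s))$ at $0$ equals $1/(1+i)^2$. Hence
\[
\sum_{i\ge1}\mathbb{P}(X_i=1)=\frac{\pi^2}{6}-1,
\]
so $1/q=\pi^2/6$, i.e. $q=6/\pi^2$, consistent with \cite{wz25}.

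\emph{Assembling the bound.} Since $6/\pi^2\approx0.608$, we have $(1-q)/q=\pi^2/6-1\approx0.645\in(0,1]$, so $\lceil (1-q)/q\rceil=1$. Theorem \ref{thm:main} then gives
\[
d_\text{TV}(N_n,H)\le q\sum_{i=n+1}^\infty\frac{1}{(i+1)^2}=\frac{6}{\pi^2}\sum_{k=n+2}^\infty\frac1{k^2}.
\]
Because $x\mapsto x^{-2}$ is decreasing, each term $k^{-2}$ is at most $\int_{k-1}^k x^{-2}\,dx$. Summing over $k\ge n+2$ bounds the tail by $\int_{n+1}^\infty x^{-2}\,dx=1/(n+1)$, which yields the claimed bound.

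There is no real obstacle here. The only points requiring care are verifying the formula for $\mathbb{P}(X_i=1)$ from the generating function and checking that the ceiling equals $1$.
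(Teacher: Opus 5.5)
Your proposal is correct and follows the paper's proof. You apply Theorem~\ref{thm:main} with $\mathbb{P}(X_i=1)=(i+1)^{-2}$ from the generating function, check that $\lceil (1-q)/q\rceil=1$, and bound $\sum_{k\ge n+2}k^{-2}$ by $\int_{n+1}^\infty x^{-2}\,\mathrm{d}x$. The one difference is that you derive $q=6/\pi^2$ from the identity $(1-q)/q=\sum_{i\ge1}\mathbb{P}(X_i=1)=\pi^2/6-1$ instead of citing \cite{wz25}, which makes your argument self-contained.
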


\subsection{The supercritical case}

We now let $X_i$ be the size of the $i$th generation of a Galton--Watson process whose offspring distribution is geometric with parameter $\alpha<1/2$, started at generation 0 with a single ancestor. Let $\mu$ denote the mean of this offspring distribution, $\mu=(1-\alpha)/\alpha>1$. Since the probability-generating function of our offspring distribution is $f(s)=\frac{\alpha}{1-(1-\alpha)s}$, the non-unit solution of $f(s)=s$ is given by $s=\mu^{-1}$. Hence, the probability-generating function of the size of the $i$th generation is
\[
\mathbb{E}[s^{X_i}]=1-\mu^i\left(\frac{\mu-1}{\mu^{i+1}-1}\right)+\frac{\mu^{i}(\mu-1)^2s}{(\mu^{i+1}-1)^2-\mu(\mu^i-1)(\mu^{i+1}-1)s}\,,
\]
for $i\geq1$ and $s\in[0,1]$; again, see Section I.4 of \cite{an72}. It follows that $\mathbb{P}(X_i=1)=\mu^i\left(\frac{\mu-1}{\mu^{i+1}-1}\right)^2$ and
\[
\sum_{i=n+1}^\infty\mathbb{P}(X_i=1)=(\mu-1)^2\sum_{i=n+1}^\infty\frac{\mu^i}{(\mu^{i+1}-1)^2}\leq(\mu-1)^2\int_n^\infty\frac{\mu^x}{(\mu^{x+1}-1)^2}\,\text{d}x=\frac{(\mu-1)^2}{\mu\log(\mu)(\mu^{n+1}-1)}
\]
for $n=0,1,\ldots$. From Theorem \ref{thm:main} we then have that 
\[
d_\text{TV}(N_n,H)\leq\frac{\lceil\theta\rceil(\mu-1)^2}{(1+\theta)\mu\log(\mu)(\mu^{n+1}-1)}\,,
\]
where
\begin{equation}\label{eq:theta}
\theta=\frac{1-q}{q}=\lim_{n\to\infty}\mathbb{E}N_n=(\mu-1)^2\sum_{i=1}^\infty\frac{\mu^i}{(\mu^{i+1}-1)^2}\leq\frac{(\mu-1)}{\mu\log(\mu)}<1
\end{equation}
for all $\mu>1$. Hence, $\left\lceil\frac{1-q}{q}\right\rceil=\lceil\theta\rceil=1$. Taking the first term of the series in \eqref{eq:theta} as a lower bound for $\theta$, we have that $\theta\geq\frac{\mu}{(\mu+1)^2}$ and obtain the following:
\begin{proposition}
Let $\{X_i:i=0,1,\ldots\}$ be a supercritical Galton--Watson process with geometric offspring distribution with mean $\mu>1$, and $X_0=1$. Let $H\sim\text{Geom}(q)$ and $q$ be as above. Then
\[
d_\text{TV}(N_n,H)\leq\frac{(\mu^2-1)^2}{\mu\log(\mu)([\mu+1]^2+\mu)(\mu^{n+1}-1)}\,.
\]
\end{proposition}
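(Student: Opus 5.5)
The supercritical proposition follows directly by combining the display just before \eqref{eq:theta} with the lower bound on $\theta$ stated there; no new probabilistic input is needed. From Theorem \ref{thm:main} we have $d_\text{TV}(N_n,H)\le q\lceil\theta\rceil\sum_{i>n}\mathbb{P}(X_i=1)$. We use $q=1/(1+\theta)$, $\lceil\theta\rceil=1$ (since $0<\theta<1$ by \eqref{eq:theta}), and the integral bound $\sum_{i>n}\mathbb{P}(X_i=1)\le (\mu-1)^2/(\mu\log(\mu)(\mu^{n+1}-1))$. Together these give
\[
d_\text{TV}(N_n,H)\le\frac{(\mu-1)^2}{(1+\theta)\mu\log(\mu)(\mu^{n+1}-1)}.
\]

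The only remaining step is to bound $1/(1+\theta)$ from above using a lower bound on $\theta$. We keep only the $i=1$ term of the series in \eqref{eq:theta}. Since $\mu^2-1=(\mu-1)(\mu+1)$, this term is
\[
(\mu-1)^2\frac{\mu}{(\mu^2-1)^2}=\frac{\mu}{(\mu+1)^2},
\]
so $\theta\ge \mu/(\mu+1)^2$. Hence
\[
1+\theta\ge\frac{(\mu+1)^2+\mu}{(\mu+1)^2}
\qquad\text{and}\qquad
\frac{1}{1+\theta}\le\frac{(\mu+1)^2}{(\mu+1)^2+\mu}.
\]

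Substituting this into the bound above, the numerator becomes $(\mu-1)^2(\mu+1)^2=(\mu^2-1)^2$ and the denominator becomes $\mu\log(\mu)\,([\mu+1]^2+\mu)(\mu^{n+1}-1)$. This is exactly the claimed bound.

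If any step needs checking, it is the integral comparison and the inequality $\theta<1$ used to get $\lceil\theta\rceil=1$. For the comparison, note that $x\mapsto\mu^x/(\mu^{x+1}-1)^2$ is decreasing for $x\ge0$. Indeed, its derivative has the sign of $(\mu^{x+1}-1)-2\mu^{x+1}<0$. So each term satisfies $\mu^i/(\mu^{i+1}-1)^2\le\int_{i-1}^i\mu^x/(\mu^{x+1}-1)^2\,\text{d}x$. The substitution $u=\mu^{x+1}$ gives the antiderivative $-1/(\mu\log(\mu)(\mu^{x+1}-1))$, which yields the stated bound. Applying the same comparison from $x=0$ gives $\theta\le(\mu-1)/(\mu\log\mu)$. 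This is less than $1$ because $\mu\log\mu>\mu-1$ for $\mu>1$, which holds since $\log\mu>1-1/\mu$. I expect no real obstacle; the proof is routine algebra on earlier displays.
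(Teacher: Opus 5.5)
Your proposal is correct and follows the paper's argument exactly: apply Theorem~\ref{thm:main} with $q=1/(1+\theta)$, $\lceil\theta\rceil=1$ and the integral tail bound, then keep the $i=1$ term of the series to get $\theta\ge\mu/(\mu+1)^2$, hence $1/(1+\theta)\le(\mu+1)^2/([\mu+1]^2+\mu)$. You also verify the monotonicity of $x\mapsto\mu^x/(\mu^{x+1}-1)^2$ and the inequality $\mu\log\mu>\mu-1$, which fills in details the paper leaves implicit.
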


\section{Simple random walks on the integers} \label{sec:RW}

In this section we apply our Theorem \ref{thm:main} to a selection of simple random walks, specifically transient simple random walks on $\mathbb{Z}$ and  $\mathbb{Z}^3$. The selection of applications presented here is intended to be illustrative rather then exhaustive. As in Section \ref{sec:GW}, the bounds we give here are explicit with moderate constants, at the cost of a slightly lengthier exposition than would be needed to obtain only rates of convergence. In the setting of the simple random walk on $\mathbb{Z}$, D\"obler \cite{d15} has previously given explicit error bounds in the Kolmogorov and Wassserstein distances for the approximation of the number of returns to the origin by a half-normal distribution, and in the two-dimensional setting Pek\"oz and R\"ollin \cite{pr11b} give exponential approximation bounds in Wasserstein distance for the number of returns of a general random walk to the origin.

\subsection{Transient random walk on $\mathbb{Z}$}

Let $\{X_i:i=0,1,\ldots\}$ be a simple random walk on $\mathbb{Z}$ with $X_0=0$ and in which the process takes a step in the positive direction with probability $\alpha\not=1/2$ or a step in the negative direction with probability $1-\alpha$ at each time, independently of all other steps. Let $N_n$ be as in \eqref{eq:Ndef} with $x^\star=0$, and assume without loss of generality that $\alpha>1/2$. It is well known that the probability the process never returns to 0 is $q=2\alpha-1$. It is clear that $\mathbb{P}(X_{2i+1}=0)=0$ for $i\in\mathbb{Z}^+$, and
\[
\mathbb{P}(X_{2i}=0)=[\alpha(1-\alpha)]^i\binom{2i}{i}\leq\frac{[4\alpha(1-\alpha)]^i}{\sqrt{\pi i}}\,,
\]
since $\binom{2i}{i}\leq\frac{4^i}{\sqrt{\pi i}}$. Hence,
\[
\sum_{i=n+1}^\infty\mathbb{P}(X_i=0)=\sum_{i=\left\lceil\frac{n+1}{2}\right\rceil}^\infty\mathbb{P}(X_{2i}=0)\leq\sum_{i=\left\lceil\frac{n+1}{2}\right\rceil}^\infty\frac{[4\alpha(1-\alpha)]^i}{\sqrt{\pi i}}\leq\frac{1}{1-4\alpha(1-\alpha)}\sqrt{\frac{2[4\alpha(1-\alpha)]^n}{\pi n}}\,,
\]
since $\left\lceil\frac{n+1}{2}\right\rceil\geq\frac{n}{2}$. We immediately obtain the following from Theorem \ref{thm:main}:
\begin{proposition}
Let $\{X_i:i=0,1,\ldots\}$ be the above simple random walk on $\mathbb{Z}$ with parameter $\alpha>1/2$ and $X_0=0$. Let $H\sim\text{Geom}(2\alpha-1)$. Then
\[
d_\text{TV}(N_n,H)\leq\frac{2\alpha-1}{1-4\alpha(1-\alpha)}\left\lceil\frac{2(1-\alpha)}{2\alpha-1}\right\rceil\sqrt{\frac{2[4\alpha(1-\alpha)]^n}{\pi n}}\,.
\]
\end{proposition}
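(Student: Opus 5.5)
This proposition follows directly from Theorem \ref{thm:main} with $x^\star=0$, once we have the escape probability $q=2\alpha-1$ and the tail bound $\sum_{i=n+1}^\infty\mathbb{P}(X_i=0)\leq\frac{1}{1-4\alpha(1-\alpha)}\sqrt{\frac{2[4\alpha(1-\alpha)]^n}{\pi n}}$ displayed above. The plan is to justify these two inputs, check that the prefactor $q\lceil(1-q)/q\rceil$ has the stated form, and multiply.

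For the prefactor, $q=2\alpha-1$ gives $1-q=2(1-\alpha)$. Hence $q\lceil\frac{1-q}{q}\rceil=(2\alpha-1)\lceil\frac{2(1-\alpha)}{2\alpha-1}\rceil$, which is exactly the constant in the statement.

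To confirm $q=2\alpha-1$, I would use the Green's function $\sum_{i\ge0}\mathbb{P}(X_{2i}=0)=\sum_i\binom{2i}{i}x^i=(1-4x)^{-1/2}$ with $x=\alpha(1-\alpha)$. This equals $1/|2\alpha-1|$, and it also equals $1/q$, since the total number of visits to $0$, including time $0$, is $1+H$.

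The main work is the tail estimate, which has three steps.

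\begin{itemize}
\item \emph{Parity.} Only even times contribute, and the smallest even index $2i\ge n+1$ has $i=\lceil\frac{n+1}{2}\rceil\ge n/2$.
\item \emph{Pointwise bound.} The standard inequality $\binom{2i}{i}\le 4^i/\sqrt{\pi i}$ gives $\mathbb{P}(X_{2i}=0)\le\rho^i/\sqrt{\pi i}$ with $\rho=4\alpha(1-\alpha)<1$.
\item \emph{Summation.} For every $i\ge i_0:=\lceil\frac{n+1}{2}\rceil$ we have $1/\sqrt{\pi i}\le 1/\sqrt{\pi i_0}\le\sqrt{2/(\pi n)}$. The remaining geometric series satisfies $\sum_{i\ge i_0}\rho^i=\rho^{i_0}/(1-\rho)\le\rho^{n/2}/(1-\rho)$, using $\rho<1$ and $i_0\ge n/2$.
\end{itemize}

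Multiplying these gives $\sqrt{\frac{2\rho^n}{\pi n}}\cdot\frac{1}{1-\rho}$, as required. The only point needing care is the direction of the inequalities when replacing $i_0$ by $n/2$: in both places the replacement is legitimate because $\rho<1$ and $1/\sqrt{i}$ is decreasing.

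Inserting the prefactor and the tail bound into Theorem \ref{thm:main} yields the proposition for $n\ge1$.
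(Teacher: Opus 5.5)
Your proposal is correct and follows the same route as the paper: apply Theorem~\ref{thm:main} with $q=2\alpha-1$, keep only even times, use $\binom{2i}{i}\le 4^i/\sqrt{\pi i}$, and use $\lceil\frac{n+1}{2}\rceil\ge n/2$ to bound the tail. Your generating-function check of $q=2\alpha-1$ and your explicit splitting of the summation step only fill in details that the paper cites as well known or handles in one line.
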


\subsection{Symmetric random walk on $\mathbb{Z}^+$ with absorption}

We consider a classical gambler's ruin problem. Let $\{X_i:i=0,1,\ldots\}$ be a simple, symmetric random walk on the non-negative integers, with $X_0=1$ and with an absorbing barrier at zero. That is, at each time the process takes a step in either the positive or negative direction, each with probability $1/2$, until it first reaches zero. Let $N_n$ be as in \eqref{eq:Ndef} with $x^\star=1$. Since the random walk without absorption is recurrent, the probability that our process never returns to its starting state is $q=1/2$. It is clear that $\mathbb{P}(X_{2i+1}=1)=0$ for $i\in\mathbb{Z}^+$, and that, since the number of paths in our random walk which return to state $1$ in $2i$ steps without hitting the absorbing barrier is given by the Catalan number $\frac{1}{i+1}\binom{2i}{i}$, we have
\[
\mathbb{P}(X_{2i}=1)=\frac{1}{(i+1)4^i}\binom{2i}{i}\leq\frac{1}{(i+1)\sqrt{\pi i}}\leq\frac{1}{\sqrt{\pi}i^{3/2}}\,,
\]
since $\binom{2i}{i}\leq\frac{4^i}{\sqrt{\pi i}}$. Hence,
\[
\sum_{i=n+1}^\infty\mathbb{P}(X_i=1)\leq\frac{1}{\sqrt{\pi}}
\sum_{i=\lceil\frac{n+1}{2}\rceil}^\infty\frac{1}{i^{3/2}}\leq\frac{1}{\sqrt{\pi}}\int_{\lceil\frac{n+1}{2}\rceil-1}^\infty\frac{\text{d}x}{x^{3/2}}\leq\frac{2^{3/2}}{\sqrt{\pi(n-2)}}
\]
for $n\geq3$, since $\lceil\frac{n+1}{2}\rceil\geq n/2$. We immediately obtain the following proposition from Theorem \ref{thm:main}:
\begin{proposition}
Let $\{X_i:i=0,1,\ldots\}$ be the above symmetric random walk on $\mathbb{Z}^+$ with an absorbing barrier at zero and $X_0=1$. Let $H\sim\text{Geom(1/2)}$. Then, for $n\geq3$,
\[
d_\text{TV}(N_n,H)\leq\sqrt{\frac{2}{\pi(n-2)}}\,.
\]
\end{proposition}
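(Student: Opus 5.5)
The proposition follows directly from Theorem \ref{thm:main} with $x^\star=1$ and $q=1/2$. The argument has three steps: identify $q$ and the ceiling factor, bound the tail of the heat kernel, and combine.

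First, I justify $q=1/2$. From state $1$ the first step goes to $0$ with probability $1/2$, after which the walk is absorbed and never returns. Otherwise the first step goes to $2$, and the unabsorbed walk is recurrent, so it returns to $1$ almost surely; it must pass through $1$ before it can reach $0$. Hence $q=1/2$, so $(1-q)/q=1$ and $\lceil(1-q)/q\rceil=1$. Theorem \ref{thm:main} then gives
\[
d_\text{TV}(N_n,H)\leq\tfrac12\sum_{i=n+1}^\infty\mathbb{P}(X_i=1)\,.
\]

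Second, I bound the heat kernel. Odd times contribute nothing by parity. For even times, a path from $1$ back to $1$ in $2i$ steps that avoids $0$ corresponds to a Dyck path, so
\[
\mathbb{P}(X_{2i}=1)=\frac{C_i}{4^i}=\frac{1}{(i+1)4^i}\binom{2i}{i}\,.
\]
Using $\binom{2i}{i}\leq 4^i/\sqrt{\pi i}$, this is at most $\pi^{-1/2}i^{-3/2}$.

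Third, I sum the tail. The sum starts at $i_0=\lceil (n+1)/2\rceil\geq n/2$. Comparing with an integral from $i_0-1$, and using that $x^{-3/2}$ is decreasing, gives
\[
\sum_{i\geq i_0}i^{-3/2}\leq\int_{i_0-1}^\infty x^{-3/2}\,\text{d}x=\frac{2}{\sqrt{i_0-1}}\leq\frac{2}{\sqrt{(n-2)/2}}=\frac{2^{3/2}}{\sqrt{n-2}}\,.
\]
This needs $i_0-1>0$, which holds for $n\geq3$. Multiplying the tail bound $\pi^{-1/2}\cdot 2^{3/2}/\sqrt{n-2}$ by $1/2$ gives $\sqrt{2/(\pi(n-2))}$, as claimed.

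The main points needing care are the value $q=1/2$, which relies on recurrence of the unabsorbed walk, and the index shift in the integral comparison that forces $n\geq3$. Both are routine.
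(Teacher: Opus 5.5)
Your proposal is correct and follows the paper's argument essentially line for line. Both use $q=1/2$ from recurrence of the unabsorbed walk, the Catalan-number formula for $\mathbb{P}(X_{2i}=1)$ with $\binom{2i}{i}\le 4^i/\sqrt{\pi i}$, and the integral comparison from $\lceil (n+1)/2\rceil-1$ using $\lceil (n+1)/2\rceil\ge n/2$. One minor correction: $i_0-1>0$ already holds for $n\geq 2$, so the restriction $n\geq3$ is really needed to make $n-2>0$ in the final step.
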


\subsection{Symmetric random walk on $\mathbb{Z}^3$}

Let $\{X_i:i=0,1,\ldots\}$ be a simple symmetric random walk on $\mathbb{Z}^3$, beginning at the origin $O$, and $N_n$ be as in \eqref{eq:Ndef} with $x^\star=O$. We let $n\geq5$ to avoid inessential complications in the analysis. It is well known that the probability that the random walk returns to the origin at some time is $0.340537330\ldots$ so that, in the notation of Theorem \ref{thm:main}, $q=1-0.3405\ldots$ and $\left\lceil\frac{1-q}{q}\right\rceil=1$; see page 103 of \cite{s76}. 
A combinatorial argument gives that $\mathbb{P}(X_{2i+1}=O)=0$ for $i\in\mathbb{Z}^+$, and
\[
\mathbb{P}(X_{2i}=O)=\left(\frac{1}{6}\right)^{2i}\sum_{j+k+\ell=i}\binom{2i}{2j,2k,2\ell}\binom{2j}{j}\binom{2k}{k}\binom{2\ell}{\ell}
=\left(\frac{1}{6}\right)^{2i}\binom{2i}{i}\sum_{j+k+\ell=i}\left(\frac{i!}{j!k!\ell!}\right)^2\,,
\]
where $\binom{2i}{2j,2k,2\ell}$ is the trinomial coefficient. The well-known inequalities 
\[
\exp\left\{\frac{1}{12i+1}\right\}\leq\frac{i!}{\sqrt{2\pi i}}\left(\frac{\mathrm{e}}{i}\right)^i\leq\exp\left\{\frac{1}{12i}\right\}
\]
give, for $j+k+\ell=i$,
\begin{equation}\label{eq:stirling}
\frac{ii!}{3^ij!k!\ell!}\leq\frac{1}{2\pi}\times\frac{i^{3/2}}{\sqrt{jk\ell}}\times\frac{i^i}{3^ij^jk^k\ell^\ell}\times\frac{\exp\left\{\frac{1}{12i}\right\}}{\exp\left\{\frac{1}{12j+1}+\frac{1}{12k+1}+\frac{1}{12\ell+1}\right\}}\,.
\end{equation}
We consider separately the three possible values of $i\bmod3$, beginning with the case where $i$ is a multiple of 3. In this case, $\frac{i!}{j!k!\ell!}$ with $j+k+\ell=i$ is maximised by $j=k=\ell=i/3$, and \eqref{eq:stirling} gives
\begin{equation}\label{eq:trinomial}
\frac{i!}{j!k!\ell!}\leq\frac{c3^i}{2\pi i}\,,
\end{equation}
for some constant $c$. We may clearly take $c=3^{3/2}$ here, but will need to allow for a larger value of this constant in other cases. Considering these other cases, it is again straightforward to show that (i) if $i\equiv1\bmod3$ then $\frac{i!}{j!k!\ell!}$ is maximised by $j=k=\frac{i-1}{3}$ and $\ell=\frac{i+2}{3}$, in which case \eqref{eq:trinomial} again holds with $c=\sqrt{2}$ for $i\geq2$, and (ii) if $i\equiv2\bmod3$ then $\frac{i!}{j!k!\ell!}$ is maximised by $j=k=\frac{i+1}{3}$ and $\ell=\frac{i-2}{3}$, in which case \eqref{eq:trinomial} again holds with $c=7$ for $i\geq3$. Combining these observations with the fact that $\sum_{j+k+\ell=i}\frac{i!}{j!k!\ell!}=3^i$, we have that, for $i\geq3$,
\begin{equation}\label{eq:3Dprob}
\mathbb{P}(X_{2i}=O)\leq\frac{7}{2\pi i}\left(\frac{1}{2}\right)^{2i}\binom{2i}{i}\leq\frac{7}{2(\pi i)^{3/2}}\,,
\end{equation}
since $\binom{2i}{i}\leq\frac{4^i}{\sqrt{\pi i}}$.
We thus have
\[
\sum_{i=n+1}^\infty\mathbb{P}(X_i=O)=\sum_{i=\left\lceil\frac{n+1}{2}\right\rceil}^\infty\mathbb{P}(X_{2i}=O)
\leq\frac{7}{2\pi^{3/2}}\sum_{i=\left\lceil\frac{n+1}{2}\right\rceil}^\infty\frac{1}{i^{3/2}}
\leq\frac{7}{2\pi^{3/2}}\int_{\left\lceil\frac{n+1}{2}\right\rceil-1}^\infty\frac{\text{d}x}{x^{3/2}}
\leq7\sqrt{\frac{2}{\pi^3n}}\,,
\]
where $n\geq5$ ensures that we can apply \eqref{eq:3Dprob}. Theorem \ref{thm:main} then immediately gives the following:
\begin{proposition}
Let $\{X_i:i=0,1,\ldots\}$ be a simple symmetric random walk on $\mathbb{Z}^3$ with $X_0=O$. Let $H\sim\text{Geom}(q)$ with $q$ as above. Then, for $n\geq5$,
\[
d_\text{TV}(N_n,H)\leq7q\sqrt{\frac{2}{\pi^3n}}\,.
\]
\end{proposition}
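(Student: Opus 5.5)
The plan is to apply Theorem \ref{thm:main} directly, since all the needed ingredients have already been assembled in the text preceding the statement. The first step is to pin down the constant $\left\lceil\frac{1-q}{q}\right\rceil$. Take $1-q=0.3405\ldots$, the return probability quoted from \cite{s76}. Then $q\approx 0.6595$ and $\frac{1-q}{q}\approx 0.516\in(0,1]$. Consequently $\left\lceil\frac{1-q}{q}\right\rceil=1$, and the bound of Theorem \ref{thm:main} reduces to $q\sum_{i=n+1}^\infty\mathbb{P}(X_i=O)$.

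The second step is the heat kernel tail, which is the only real work. I would proceed as follows.
\begin{itemize}
\item Odd-time probabilities vanish by parity, so only even times contribute.
\item Write $\mathbb{P}(X_{2i}=O)$ through the multinomial sum, which collapses to $(1/6)^{2i}\binom{2i}{i}\sum_{j+k+\ell=i}\binom{i}{j,k,\ell}^2$.
\item Bound the sum of squares by $\max_{j,k,\ell}\binom{i}{j,k,\ell}\cdot 3^i$.
\item Control the maximal trinomial coefficient by $\frac{c\,3^i}{2\pi i}$ with $c=7$, valid for all $i\ge3$. This uses the Stirling bounds \eqref{eq:stirling} together with the case split on $i\bmod 3$.
\item Combine with $\binom{2i}{i}\le 4^i/\sqrt{\pi i}$ to obtain \eqref{eq:3Dprob}, namely $\mathbb{P}(X_{2i}=O)\le \frac{7}{2(\pi i)^{3/2}}$ for $i\ge3$.
\end{itemize}
The main obstacle is verifying the constants in the three residue cases, since $c=7$ has to cover the worst case $i\equiv2\bmod 3$ uniformly. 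I would handle this by checking that the entropy factor $\frac{i^i}{3^i j^jk^k\ell^\ell}$ and the factor $\frac{i^{3/2}}{\sqrt{jk\ell}}$ at the near-balanced maximiser are bounded by explicit constants decreasing in $i$. The smallest admissible $i$ can then be checked directly.

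The final step sums the even-time bounds over $i\ge\lceil\frac{n+1}{2}\rceil$. The assumption $n\ge5$ guarantees $\lceil\frac{n+1}{2}\rceil\ge3$, so \eqref{eq:3Dprob} applies to every term. Since $x^{-3/2}$ is decreasing, compare the sum with $\int_{m-1}^\infty x^{-3/2}\,\mathrm{d}x=2(m-1)^{-1/2}$, where $m=\lceil\frac{n+1}{2}\rceil$. Using $m-1\ge\frac{n-1}{2}$ gives a tail bound of order $\frac{7}{\pi^{3/2}}\sqrt{2/(n-1)}$.

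The paper's display goes one step further and writes $7\sqrt{2/(\pi^3 n)}$, i.e. with $n$ in place of $n-1$. That replacement is not justified as written, since $\sqrt{2/(n-1)}>\sqrt{2/n}$. I would recover it by a slightly sharper comparison, for instance $\sum_{i\ge m}i^{-3/2}\le\int_{m-1/2}^\infty x^{-3/2}\,\mathrm{d}x$, which holds by convexity of $x^{-3/2}$, together with $m-\tfrac12\ge\tfrac n2$. This gives exactly $\frac{7}{2\pi^{3/2}}\cdot 2\sqrt{2/n}=7\sqrt{2/(\pi^3n)}$. Multiplying by $q$ then yields the stated bound $7q\sqrt{2/(\pi^3 n)}$ from Theorem \ref{thm:main}.
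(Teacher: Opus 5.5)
Your proposal is correct and follows the paper's own route. You apply Theorem \ref{thm:main} with $\lceil(1-q)/q\rceil=1$, use the Stirling bound \eqref{eq:stirling} on the balanced trinomial coefficient to get \eqref{eq:3Dprob} for $i\ge3$, and finish with an integral comparison for the tail.

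The one genuine difference is in the last step, and there you are right. With $m=\lceil (n+1)/2\rceil$, the paper's comparison with $\int_{m-1}^\infty x^{-3/2}\,\mathrm{d}x$ only yields $7\sqrt{2/(\pi^3(n-1))}$ when $n$ is odd. At $n=5$ this is about $0.889$, against the claimed $0.795$. Your midpoint bound $\sum_{i\ge m}i^{-3/2}\le\int_{m-1/2}^\infty x^{-3/2}\,\mathrm{d}x$ holds by convexity, and combined with $m-\frac12\ge\frac n2$ it gives exactly $7\sqrt{2/(\pi^3 n)}$ for every $n\ge5$. Your version therefore closes a small gap in the paper's argument.

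On the constants, rely on your uniform check rather than the paper's per-residue values. The paper's claim of $c=\sqrt2$ for $i\equiv1\bmod3$ is too small: at $i=4$ the maximal coefficient is $12$, which forces $c\ge 96\pi/81\approx3.72$. Nevertheless $c=7$ is safe in all cases. At the balanced maximiser, $i^{3/2}/\sqrt{jk\ell}\le4\sqrt2$, with equality at $i=4$ (so the worst case is not in the class $i\equiv2$). The entropy and exponential factors in \eqref{eq:stirling} are each at most $1$.
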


\section{Random walks on other infinite graphs and groups}\label{sec:GG}

In each of our preceding applications we have obtained error bounds with explicit constants. In this section we consider further applications to random walks in which error bounds are available via appropriate heat kernel estimates, but may be somewhat less explicit than those in Sections \ref{sec:GW} and \ref{sec:RW}. Our selection of applications is again illustrative rather than exhaustive; see, for example, \cite{hs93,w00} for further models to which our results may be applied.

\subsection{Random walk on a regular tree}

Let $\{X_i:i=0,1,\ldots\}$ be a simple random walk on an underlying infinite, regular tree of degree $d\geq3$, with $X_0=O$. By Lemma 1.9 and Theorem 11.1 of \cite{w00} we have that
\[
\sum_{i=n+1}^\infty\mathbb{P}(X_i=O)\leq\sum_{i=n+1}^\infty\left(\frac{2\sqrt{d-1}}{d}\right)^i=\frac{2\sqrt{d-1}}{d-2\sqrt{d-1}}\left(\frac{2\sqrt{d-1}}{d}\right)^n\,.
\]
By the same bound on $\mathbb{P}(X_i=O)$ we have that $q$, the parameter of the geometric limit of $N_n$ as in \eqref{eq:Ndef}, satisfies $\frac{1-q}{q}\leq\frac{2\sqrt{d-1}}{d-2\sqrt{d-1}}$, so that $q\geq1-\frac{2\sqrt{d-1}}{d}$. Hence, Theorem \ref{thm:main} gives the following:
\begin{proposition}
For $\{X_i:i=0,1,\ldots\}$ a simple random walk on a regular tree of degree $d\geq3$,    
\[\
d_\text{TV}(N_n,H)\leq \frac{2q\sqrt{d-1}}{d-2\sqrt{d-1}}\left\lceil\frac{1-q}{q}\right\rceil\left(\frac{2\sqrt{d-1}}{d}\right)^n\,,
\]
for some $q\geq1-\frac{2\sqrt{d-1}}{d}$, where $H\sim\text{Geom}(q)$ and $N_n$ is as in \eqref{eq:Ndef}.
\end{proposition}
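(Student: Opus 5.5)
The plan is to apply Theorem \ref{thm:main} directly with $x^\star=O$; all the work goes into bounding the tail $\sum_{i=n+1}^\infty\mathbb{P}(X_i=O)$ and identifying $q$. Following the argument in the text just before the statement, I would invoke Lemma 1.9 and Theorem 11.1 of \cite{w00}. Together these say that the spectral radius of simple random walk on the $d$-regular tree is $\rho=\frac{2\sqrt{d-1}}{d}$. They also say that the return probabilities satisfy $\mathbb{P}(X_i=O)\leq\rho^i$ for every $i\geq0$, not just asymptotically. This uniform bound is where the lemma is needed: for the symmetric operator $P$ on $\ell^2$ of the vertex set, $p^{(i)}(O,O)=\langle \delta_O,P^i\delta_O\rangle\leq\|P\|^i=\rho^i$. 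Note that $\rho<1$ since $d\geq3$.

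Summing the geometric series gives
\[
\sum_{i=n+1}^\infty\mathbb{P}(X_i=O)\leq\sum_{i=n+1}^\infty\rho^i=\frac{\rho^{n+1}}{1-\rho}=\frac{2\sqrt{d-1}}{d-2\sqrt{d-1}}\rho^n,
\]
which uses the identity $\rho/(1-\rho)=2\sqrt{d-1}/(d-2\sqrt{d-1})$. Substituting this into Theorem \ref{thm:main} yields exactly the displayed bound, with the factor $q\lceil\frac{1-q}{q}\rceil$ carried along unchanged.

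It remains to justify the hypotheses and the lower bound on $q$. Transience of $O$, and hence $q>0$, follows from $\rho<1$: the Green function $\sum_{i\ge0}\mathbb{P}(X_i=O)$ is finite. For the lower bound, I would use the identity from the proof of Theorem \ref{thm:main}: the total number of returns is $\text{Geom}(q)$, so
\[
\frac{1-q}{q}=\sum_{i=1}^\infty\mathbb{P}(X_i=O)\leq\frac{\rho}{1-\rho}.
\]
Rearranging, $1/q\leq 1/(1-\rho)$, that is, $q\geq1-\rho=1-\frac{2\sqrt{d-1}}{d}$.

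The only nontrivial ingredient is the non-asymptotic inequality $\mathbb{P}(X_i=O)\leq\rho^i$ valid for all $i$. I would take it from \cite{w00} via the operator-norm argument above, rather than from the local limit asymptotics $p^{(2i)}(O,O)\sim c\,\rho^{2i}i^{-3/2}$, since those only give a bound up to an unspecified constant. Everything else is routine algebra.
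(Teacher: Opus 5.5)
Your proposal is correct and follows the paper's argument: the uniform bound $\mathbb{P}(X_i=O)\le\bigl(\frac{2\sqrt{d-1}}{d}\bigr)^i$ from \cite{w00} is summed as a geometric series and plugged into Theorem \ref{thm:main}, and the same bound applied to $\frac{1-q}{q}=\sum_{i\ge1}\mathbb{P}(X_i=O)$ gives $q\ge1-\frac{2\sqrt{d-1}}{d}$. Your operator-norm justification is valid, but it relies on the true yet nontrivial identity $\|P\|=\rho$ for the symmetric operator; the supermultiplicativity $p^{(m+n)}(O,O)\ge p^{(m)}(O,O)\,p^{(n)}(O,O)$ combined with Fekete's lemma gives $p^{(i)}(O,O)\le\rho^i$ more directly, and either suffices.
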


\subsection{Random walk on an infinite percolation cluster in $\mathbb{Z}^d$}

Throughout this section we let $C$ denote a positive constant, independent of $n$, whose value may vary from line to line. Consider the graph $(\mathbb{Z}^d,\mathcal{E}_d)$ for some $d\geq3$, where $\mathcal{E}_d=\{\{x,y\}:|x-y|=1\}$. For each $e\in\mathcal{E}_d$ we have an independent Bernoulli random variable $\omega(e)$ with mean $\alpha$. Edges $e$ with $\omega(e)=1$ are open. We assume that $\alpha>\alpha_c$, the critical probability for this graph, so that there is a unique infinite cluster of vertices $\mathcal{C}$ connected by paths of open edges, and without loss of generality we assume the origin $O\in\mathcal{C}$. For a fixed configuration $\omega\in\{0,1\}^{\mathcal{E}_d}$, we let  $\{X_i:i=0,1,\ldots\}$ be a simple random walk on $\mathcal{C}$ with $X_0=O$, where at each step the process moves along an open edge adjacent to its current position chosen uniformly at random from the set of such edges.
By Theorem 5.1 of \cite{bh09} we have that $\mathbb{P}(X_i=O)\leq Ci^{-d/2}$ for all suitably large $i$. Hence, for all $n$ large enough we have
\begin{equation}\label{eq:perc}
\sum_{i=n+1}^\infty\mathbb{P}(X_i=O)\leq C\sum_{i=n+1}^\infty i^{-d/2}\leq C\int_n^\infty x^{-d/2}\,\text{d}x=Cn^{1-d/2}\,,
\end{equation}
and Theorem \ref{thm:main} gives the following:
\begin{proposition}
Let $\{X_i:i=0,1,\ldots\}$ be a simple random walk on an infinite percolation cluster in $\mathbb{Z}^d$ for some $d\geq3$. Then there exist constants $C>0$ and $q\in(0,1]$ such that
\[
d_\text{TV}(N_n,H)\leq Cn^{1-d/2}\,,
\]
for $n$ large enough, where $H\sim\text{Geom}(q)$ and $N_n$ is as in \eqref{eq:Ndef}.
\end{proposition}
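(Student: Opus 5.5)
The plan is to apply Theorem \ref{thm:main} directly, using the tail estimate \eqref{eq:perc}. The one additional input is that $q>0$. In the theorem's notation, $q$ is the probability that the walk on $\mathcal{C}$, started at $O$, never returns to $O$, and this is determined by the fixed configuration $\omega$. To show $q>0$, recall that the number of returns over an infinite horizon is geometric with parameter $q$, with mean $\frac{1-q}{q}=\sum_{i\geq1}\mathbb{P}(X_i=O)$. It therefore suffices to show this series converges. Theorem 5.1 of \cite{bh09} gives $\mathbb{P}(X_i=O)\leq Ci^{-d/2}$ for all $i\geq i_0$, for some $i_0$ depending on $\omega$. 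For $d\geq3$ we have $d/2>1$, so $\sum_{i\geq i_0}i^{-d/2}<\infty$. The finitely many earlier terms are each at most $1$, so the whole series is finite. This gives transience of $O$ and $q\in(0,1]$.

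Next I bound the prefactor $q\lceil\frac{1-q}{q}\rceil$ in Theorem \ref{thm:main}. Since $q\leq1$ and $\frac{1-q}{q}<\infty$ by the previous step, this prefactor is a finite constant independent of $n$. I absorb it into $C$.

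For $n\geq i_0$, \eqref{eq:perc} gives $\sum_{i=n+1}^\infty\mathbb{P}(X_i=O)\leq Cn^{1-d/2}$. The comparison with the integral is valid because $x^{-d/2}$ is decreasing, and
\[
\int_n^\infty x^{-d/2}\,\text{d}x=\frac{2}{d-2}n^{1-d/2},
\]
where the factor $\frac{2}{d-2}$ is absorbed into $C$. Substituting into Theorem \ref{thm:main} then yields $d_\text{TV}(N_n,H)\leq Cn^{1-d/2}$ for all $n$ large enough.

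The only step needing care is the first: the heat kernel bound holds only for large $i$, and the threshold $i_0$ is random, depending on $\omega$. This is harmless here, because the statement is for a fixed configuration and asserts only the existence of constants $C$ and $q$ and a threshold on $n$. All of these are allowed to depend on $\omega$. I would say explicitly that the result holds for almost every $\omega$ with $O\in\mathcal{C}$, which is the setting in which \cite{bh09} applies.
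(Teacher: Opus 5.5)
Your proposal is correct and follows the same route as the paper: you feed the heat kernel bound $\mathbb{P}(X_i=O)\leq Ci^{-d/2}$ from \cite{bh09} into Theorem~\ref{thm:main} via the integral comparison \eqref{eq:perc}. The paper leaves two points implicit that you handle correctly: that the same bound gives $\sum_i\mathbb{P}(X_i=O)<\infty$, hence $q>0$ and a finite prefactor $q\lceil\frac{1-q}{q}\rceil$, and that $C$, $q$ and the threshold on $n$ may depend on $\omega$.
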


\subsection{Random walk on a group with polynomial growth}

Throughout this section we again let $C$ denote a positive constant, independent of $n$, whose value may vary from line to line. Let $\nu$ be a symmetric probability measure supported on a finite set which generates a group $\Gamma$. Let $\{X_i:i=0,1,\ldots\}$ be an irreducible and aperiodic random walk on $\Gamma$ with $X_0=O$ and transition probabilities $p(x,y)=\nu(x^{-1}y)$. We further assume that $\Gamma$ has polynomial growth, that is, the number of elements of $\Gamma$ with word length less than $n$ is of order $n^d$ for some $d$, and that $d\geq3$. Under these assumptions, $\mathbb{P}(X_i=O)\leq Ci^{-d/2}$ for all $i\geq1$; see page 674 of \cite{hs93}. Then \eqref{eq:perc} holds for all $n$, and Theorem \ref{thm:main} gives the following:
\begin{proposition}
Let $\{X_i:i=0,1,\ldots\}$ be a random walk on a group with polynomial growth rate $d\geq3$ as above. Then there exists $C>0$ and $q\in(0,1]$ such that
\[
d_\text{TV}(N_n,H)\leq Cn^{1-d/2}\,,
\]
where $H\sim\text{Geom}(q)$ and $N_n$ is as in \eqref{eq:Ndef}.
\end{proposition}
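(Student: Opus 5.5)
The plan is to apply Theorem \ref{thm:main} directly, as in the preceding examples. The cited heat kernel estimate (page 674 of \cite{hs93}) gives $\mathbb{P}(X_i=O)\leq Ci^{-d/2}$ for all $i\geq1$, and the tail of this series is controlled by the integral comparison \eqref{eq:perc}.

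First, transience of $O$ follows from the heat kernel bound and $d\geq3$. The bound gives
\[
\sum_{i\geq1}\mathbb{P}(X_i=O)\leq C\sum_{i\geq1}i^{-d/2}<\infty,
\]
and a state whose expected number of returns is finite is transient. So the hypothesis of Theorem \ref{thm:main} holds with $x^\star=O$, and $q=\lim_n\mathbb{P}(N_n=0)\in(0,1]$ is well defined. Since $(1-q)/q$ equals the finite expected total number of returns, $\lceil (1-q)/q\rceil$ is a finite constant depending only on $\nu$ and $\Gamma$. It does not depend on $n$.

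Second, bound the tail. Since $x\mapsto x^{-d/2}$ is decreasing, for $n\geq1$
\[
\sum_{i=n+1}^\infty\mathbb{P}(X_i=O)\leq C\int_n^\infty x^{-d/2}\,\text{d}x=\frac{2C}{d-2}\,n^{1-d/2}.
\]
Here $d\geq3$ ensures convergence. Substituting into Theorem \ref{thm:main} gives $d_\text{TV}(N_n,H)\leq q\lceil(1-q)/q\rceil\frac{2C}{d-2}n^{1-d/2}$, and we absorb the prefactor into a new constant $C$.

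There is no real obstacle. The only points needing care are the following:
\begin{itemize}
\item the heat kernel bound is valid for every $i\geq1$, not just large $i$, so the result holds for all $n\geq1$;
\item the case $n=0$ is trivial, since total variation distance is at most $1$ and we may enlarge $C$;
\item the growth exponent $d$ is the one appearing in the heat kernel estimate of \cite{hs93}, which is how that result is stated under the polynomial growth hypothesis.
\end{itemize}
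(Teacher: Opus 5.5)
Your proposal is correct and follows the paper's argument: the Hebisch--Saloff-Coste bound $\mathbb{P}(X_i=O)\leq Ci^{-d/2}$ for all $i\geq1$, the integral comparison \eqref{eq:perc} for the tail, and then Theorem \ref{thm:main} with the $n$-independent prefactor $q\lceil(1-q)/q\rceil$ absorbed into $C$. Your explicit check that $O$ is transient (so that $q>0$ and the theorem applies) is a step the paper leaves implicit. Your remark about $n=0$ is unnecessary, since $n^{1-d/2}$ is infinite there and no enlargement of $C$ is needed.
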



\begin{thebibliography}{99}

\bibitem{aj06} J.~A.~Adell and P.~Jodr\'a (2006). Exact Kolmogorov and total variation distances between some familiar discrete distributions. \emph{J. Inequal. Appl.} {\bf 2006}: 64307.

\bibitem{an72} K.~B.~Athreya and P.~E.~Ney (1972). \emph{Branching Processes}. Springer, Berlin.

\bibitem{bhj92} A.~D.~Barbour, L.~Holst and S.~Janson (1992). \emph{Poisson Approximation}. Oxford University Press, Oxford.

\bibitem{bh09} M.~T.~Barlow and B.~M.~Hambly (2009). Parabolic Harnack inequality and local limit theorem for percolation clusters. \emph{Electron. J. Probab.} {\bf 14}(1): 1--26.

\bibitem{cjp24} N.~Cardona-Tob\'{o}n, A.~Jaramillo and S.~Palau (2024). Rates on Yaglom’s limit for Galton--Watson processes in a varying environment. \emph{ALEA, Lat. Am. J. Probab. Math. Stat.} {\bf 21}(1): 1--23.

\bibitem{d10} F.~Daly (2010). Stein's method for compound geometric approximation. \emph{J. Appl. Probab.} {\bf 47}(1): 146--156.

\bibitem{d15} C.~D\"obler (2015). Stein’s method for the half-normal distribution with applications to limit theorems related to the simple symmetric random walk. \emph{ALEA, Lat. Am. J. Probab. Math. Stat.} {\bf 12}(1): 171--191.

\bibitem{e99} T.~Erhardsson (1999). Compound Poisson approximation for Markov chains using Stein's method. \emph{Ann. Probab.} {\bf 27}(1): 565--596.

\bibitem{hs93} W.~Hebisch and L.~Saloff-Coste (1993). Gaussian estimates for Markov chains and random walks on groups. \emph{Ann. Probab.} {\bf 21}(2): 673--709.

\bibitem{p96} E.~Pek\"oz (1996). Stein's method for geometric approximation. \emph{J. Appl. Probab.} {\bf 33}(3): 707--713.

\bibitem{pr11b} E.~A.~Pek\"oz and A.~R\"ollin (2011). Exponential approximation for the nearly critical Galton--Watson process and occupation times of Markov chains. \emph{Electron. J. Probab.} {\bf 16}:  1381--1393.

\bibitem{pr11a} E.~A.~Pek\"oz and A.~R\"ollin (2011). New rates for exponential approximation and the theorems of R\'{e}nyi and Yaglom. \emph{Ann. Probab.} {\bf 39}(2): 587--608.

\bibitem{prr13} E.~A.~Pek\"oz, A.~R\"ollin and N.~Ross (2013). Total variation error bounds for geometric approximation. \emph{Bernoulli} {\bf 19}(2): 610--632.

\bibitem{s76} F.~Spitzer (1976). \emph{Principles of Random Walk}, Second Edition. Springer, New York.

\bibitem{wz25} H.-M.~Wang and S.~Zhang (2025). Beyond Poisson approximation: Sums of Markovian Bernoulli variables with applications to Brownian motions and branching processes. Preprint, available at \texttt{https://arxiv.org/abs/2504.19404}.

\bibitem{w00} W.~Woess (2000). \emph{Random Walks on Infinite Graphs and Groups}. Cambridge Tracts in Mathematics 138, Cambridge Univ. Press, Cambridge.

\end{thebibliography}
\end{document}